\def\k{\kappa}
\newcommand{\mN}{\mathcal{N}}
\newcommand{\mR}{\mathcal{R}}
\newcommand{\mS}{\mathcal{S}}
\newcommand{\fm}{\mathfrak{m}}
\newcommand{\fn}{\mathfrak{n}}
\newcommand{\bfC}{\mathbf{C}}
\newcommand{\bfF}{\mathbf{F}}
\newcommand{\bfQ}{\mathbf{Q}}
\newcommand{\bfT}{\mathbf{T}}
\newcommand{\bfZ}{\mathbf{Z}}
\newcommand{\Oo}{\mathcal{O}}
\newcommand{\ov}{\overline}
\newcommand{\be}{\begin{equation}}
\newcommand{\ee}{\end{equation}}
\newcommand{\bes}{\begin{equation*}}
\newcommand{\ees}{\end{equation*}}
\newcommand{\bs}{\begin{split}}
\newcommand{\es}{\end{split}}
\newcommand{\bss}{\begin{split*}}
\newcommand{\ess}{\end{split*}}
\newcommand{\bmat}{\left[ \begin{matrix}}
\newcommand{\emat}{\end{matrix} \right]}
\newcommand{\bsmat}{\left[ \begin{smallmatrix}}
\newcommand{\esmat}{\end{smallmatrix} \right]}
\newcommand{\bml}{\begin{multline}}
\newcommand{\eml}{\end{multline}}
\newcommand{\bmls}{\begin{multline*}}
\newcommand{\emls}{\end{multline*}}
\DeclareMathOperator{\Ann}{Ann}
\DeclareMathOperator{\diag}{diag}
\DeclareMathOperator{\End}{End}
\DeclareMathOperator{\SL}{SL}
\DeclareMathOperator{\Sp}{Sp}
\DeclareMathOperator{\U}{U}
\DeclareMathOperator{\val}{val}
\newcommand{\hf}{\hspace{5pt}}
\theoremstyle{plain}
\newtheorem{thm}{Theorem}
\newtheorem{prop}[thm]{Proposition}
\newtheorem{lemma}[thm]{Lemma}
\theoremstyle{definition}
\newtheorem{example}[thm]{Example}
\newtheorem{rem}[thm]{Remark}
\numberwithin{thm}{section}
\numberwithin{equation}{section}
\begin{document}
\title[On higher congruences between automorphic forms]{On higher congruences between automorphic forms}
\author{Tobias Berger$^1$ \and
Krzysztof Klosin$^2$ \and Kenneth Kramer$^2$}
\address{$^1$School of Mathematics and Statistics, University of Sheffield, Hicks Building, Hounsfield Road, Sheffield S3 7RH, UK.}
\address{$^2$Department of Mathematics,
Queens College,
City University of New York,
65-30 Kissena Blvd,
Queens, NY 11367, USA.}

\subjclass[2000]{11F33}

\keywords{congruences, automorphic forms}

\date{10 February 2013}

\begin{abstract}

We prove a commutative algebra result which has consequences for congruences between automorphic forms modulo prime powers. If $C$ denotes the congruence module for a fixed automorphic Hecke eigenform $\pi_0$ we prove an exact relation between the $p$-adic valuation of the order of $C$ and the sum of the exponents of $p$-power congruences between the Hecke eigenvalues of $\pi_0$ and other automorphic forms. We apply this result to several situations including the congruences described by Mazur's Eisenstein ideal. 
\end{abstract}


\maketitle

\section{Introduction}
Let $p$ be a prime.
 Let $f_1, \dots, f_r$ be all weight $2$  normalized simultaneous eigenforms of level $\Gamma_0(N)$ with $N$ prime. A famous result of Mazur's (\cite{Mazur78} Proposition 5.12(iii)) states that at least one of these forms  is congruent modulo $p$ to the Eisenstein series $E_2^*=1-N+24 \sum_{n=1}^{\infty} \sigma^*(n) q^n$ for $$\sigma^*(n)=\sum_{0<d\mid n, (d,N)=1} d$$ if $p$ divides the numerator $\mN$ of $\frac{N-1}{12}$. One may ask for the precise relation between $\val_p(\mN)$ and the `depth' of congruence of the newforms $f_1, \dots, f_r$ to $E_2^*$. One of our results (Proposition \ref{Version 1}) provides an answer to this question. More precisely, if $\varpi_N$ is a uniformizer in the valuation ring of a finite extension of $\bfQ_p$ (of ramification index $e_N$) which contains all of the eigenvalues of the $f_i$s and $m_i$ is defined as the largest integer such that $E_2^*\equiv f_i$ mod $\varpi_N^{m_i}$, then \be \label{intro1}\frac{1}{e_N}\sum_{i=1}^r m_i = \val_p(\mN).\ee 

In general the 
 Hecke eigenvalue congruences between a fixed automorphic eigenform $\pi_0$ on a reductive algebraic group $G$ and other eigenforms (call the set of them $\Pi$) on $G$ 
are controlled by the order of the so called \emph{congruence module}. This module can be defined as the quotient of the Hecke algebra $\bfT$ acting on the forms in $\Pi$ by the image $J$ in $\bfT$ of the annihilator of $\pi_0$ in the Hecke algebra $\bfT_0$ acting on the forms in $\Pi \cup \{\pi_0\}$
 (for details on this setup see section \ref{Congruences - the general case}). In the more classical situation, where $\pi_0$ is an Eisenstein series, $J$ is often called the \emph{Eisenstein ideal}. However, our considerations apply in a much more general framework, e.g., when $J$ corresponds to a primitive cusp form or to a CAP automorphic representation (see section \ref{s5}).

 In this general setup one may ask again how many of the automorphic forms in $\Pi$ have Hecke eigenvalues congruent to those of $\pi_0$ and what the ``depth''
of these congruences is. We prove that whenever the ideal $J$ is principal (like in the Mazur example above) then an equality analogous to (\ref{intro1}) can be achieved with $\#\bfT/J$ replacing $\mN$. However, if the ideal $J$ is not principal, we show that instead of equality the analog $S$ of the quantity on the left of (\ref{intro1}) 
(the total ``depth" of the congruences) is bounded from below by $\val_p(\# \bfT/J)$ (cf. Proposition \ref{mainprop}). Let us note here that in general principality of $J$ is not known or even expected to hold. In many cases it is conjectured or known that the order $\# \bfT/J$ is related to a special value of an appropriate $L$-function. In this case, our results provide an $L$-value bound on $S$. 

Our paper is organized as follows. In section \ref{Commutative algebra} we prove a commutative algebra result which will be the basis for all of our applications to congruences among automorphic forms. 
In fact it can be seen as a more general result regarding the distribution of congruences among various subsets of automorphic forms (not just single automorphic forms) - for an application in this context to a modularity problem see \cite{BergerKlosin13preprint}. In section \ref{s3} we apply this result to Mazur's congruences and derive (\ref{intro1}). In section \ref{Congruences - the general case} we set up a framework to deal with general type of congruences among automorphic forms and prove a lower bound on the 
total depth of congruences in terms of the order of a congruence module. Finally, in section \ref{s5} we provide more examples. 

On completion of this paper we learned from Fred Diamond that a similar argument was already used by him and Wiles to prove an analogue of the inequality in our Proposition \ref{mainprop}  in the case of congruences between cusp forms of different levels (cf. Theorem 4C in \cite{Diamond89} and Lemma 1.4.3 in \cite{Wiles88}). 


We would like to express special thanks to Gabor Wiese for his comments on the paper and for help with verifying some numerical examples. We would also like to thank Jim Brown for an email correspondence regarding multiplicity one for Saito-Kurokawa lifts.

\section{Commutative algebra} \label{Commutative algebra}

Let $p$ be a prime. Let $\Oo$ be the valuation ring of a finite extension $E$ of $\bfQ_p$. Fix a choice of a uniformizer $\varpi$ of $\Oo$ and write $\bfF=\Oo/\varpi \Oo$ for the residue field. 

Let $s \in \bfZ_+$ and let $\{n_1, n_2, \dots, n_s\}$ be a set of $s$ positive integers. Set $n=\sum_{i=1}^s n_i$.
 Let $A_i=\Oo^{n_i}$ with $i\in \{1,2,\dots, s\}$.  Set $A=\prod_{i=1}^s A_i=\Oo^n$. Let $\varphi_i: A\twoheadrightarrow A_i$ be the canonical projection. Let $T \subset A$ be a (local complete) $\Oo$-subalgebra which is of full rank as an $\Oo$-submodule and let $J \subset T$ be an ideal of finite index. Set $T_i=\varphi_i(T)$ and $J_i=\varphi_i(J)$. Note that each $T_i$ is also a (local complete) $\Oo$-subalgebra and the projections $\varphi_i|_{T}$ are local homomorphisms. Then $J_i$ is also an ideal of finite index in $T_i$.
\begin{thm} \label{Kr12} If $\# \bfF^{\times} \geq s-1$ and each $J_i$ is principal, then $\# \prod_{i=1}^s T_i/J_i \geq \# T/J$.\end{thm}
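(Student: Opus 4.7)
The plan is to use the snake lemma to reduce the claim to an equivalent inequality between congruence modules, and then to construct an explicit $\Oo$-linear surjection $\prod T_i/T \twoheadrightarrow \prod J_i/J$ using a carefully chosen single element $\hat{j}\in J$. The principality of the $J_i$ and the hypothesis $\#\bfF^\times\geq s-1$ each enter at exactly one, sharp, point.

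Applying the snake lemma to the commutative diagram whose rows are the short exact sequences $0\to J\to T\to T/J\to 0$ and $0\to \prod J_i\to \prod T_i\to \prod T_i/\prod J_i\to 0$, with vertical arrows the natural inclusions, yields an exact sequence
$0\to \ker h\to \prod J_i/J\to \prod T_i/T\to \coker h\to 0$,
where $h\colon T/J\to \prod T_i/\prod J_i$. Counting cardinalities in this sequence and in the tautological $0\to \ker h\to T/J\to \prod T_i/\prod J_i\to \coker h\to 0$ gives the identity
\[
\#(T/J)\cdot \#{\textstyle\prod}T_i/T \;=\; \#{\textstyle\prod}T_i/{\textstyle\prod}J_i \cdot \#{\textstyle\prod}J_i/J \;=\; \Bigl(\prod_i \#T_i/J_i\Bigr)\cdot \#{\textstyle\prod}J_i/J.
\]
Thus it suffices to prove $\#\prod T_i/T \geq \#\prod J_i/J$.

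To construct the surjection, pick generators $j_i\in T_i$ of the $J_i$ and lifts $\tilde j_i\in J$ with $\varphi_i(\tilde j_i)=j_i$. Since $T_i\subset A_i=\Oo^{n_i}$ is $\Oo$-torsion-free, $j_i$ is a non-zero-divisor in $T_i$, and there is a unique $b_k^{(i)}\in T_k$ with $\varphi_k(\tilde j_i)=b_k^{(i)}j_k$; note $b_k^{(k)}=1$. For $\alpha=(\alpha_1,\dots,\alpha_s)\in\Oo^s$ set
\[
\hat j_\alpha \;:=\; \sum_i \alpha_i \tilde j_i \;\in\; J, \qquad w_k(\alpha) \;:=\; \sum_i \alpha_i b_k^{(i)} \;\in\; T_k,
\]
so $\varphi_k(\hat j_\alpha)=w_k(\alpha)\,j_k$. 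The key claim is that $\alpha$ can be chosen so that every $w_k(\alpha)\in T_k^\times$, which makes $\varphi_k(\hat j_\alpha)$ a generator of $J_k$ for every $k$. Because the $\varphi_k$ are local homomorphisms of local rings, all $T_k$ share a common residue field $\bfF_T\supseteq \bfF$, and the claim reduces to finding $\bar\alpha\in \bfF^s$ with $\bar w_k(\bar\alpha)=\sum_i \bar\alpha_i\,\bar b_k^{(i)}\neq 0$ in $\bfF_T$ for each $k$. The coefficient of $\bar\alpha_k$ in $\bar w_k$ is $1$, so each $\bar w_k$ is a nonzero $\bfF$-linear form and the hyperplane $H_k:=\ker \bar w_k\subset \bfF^s$ has codimension $\geq 1$ and passes through the origin. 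Working in projective space and applying a union bound with $q=\#\bfF$,
\[
\bigl|\mathbf{P}(\bfF^s)\setminus {\textstyle\bigcup_k}\mathbf{P}(H_k)\bigr| \;\geq\; \frac{q^s-1}{q-1}-s\cdot\frac{q^{s-1}-1}{q-1} \;=\; \frac{q^{s-1}(q-s)+(s-1)}{q-1},
\]
which is strictly positive whenever $q\geq s$, i.e.\ precisely under the hypothesis $\#\bfF^\times\geq s-1$. Lifting any valid $\bar\alpha$ back to $\alpha\in\Oo^s$ produces $\hat j:=\hat j_\alpha\in J$ with $\varphi_k(\hat j)=u_k j_k$, $u_k\in T_k^\times$, for every $k$.

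With such $\hat j$ in hand, multiplication by $\hat j$ defines an $\Oo$-linear map $\phi\colon \prod T_i\to \prod T_i$, $x\mapsto x\hat j$, whose image in the $k$-th factor is $T_k\cdot u_k j_k = J_k$, so $\phi$ surjects onto $\prod J_i$. Because $\hat j\in J$ and $J$ is a $T$-ideal, $\phi(T)=T\hat j\subseteq J$, so $\phi$ descends to a surjection $\prod T_i/T\twoheadrightarrow \prod J_i/J$, which finishes the proof via the reduction above. The main obstacle is the middle step, i.e.\ controlling \emph{all} $s$ coordinates at once by a single element of $J$: any individual $\tilde j_k$ controls only the $k$-th coordinate, and combining $s$ lifts forces one to avoid $s$ hyperplanes simultaneously. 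The naive affine union bound would require $q\geq s+1$; using projective space to exploit the fact that all $H_k$ pass through the origin is what makes the hypothesis $\#\bfF^\times\geq s-1$ exactly sharp.
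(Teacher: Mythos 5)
Your proof is correct, and while it shares the same pivotal idea as the paper's proof --- namely, producing a single element $\hat j\in J$ whose projection to each $T_k$ generates $J_k$ --- both the construction of that element and the way it is used to conclude are genuinely different from the paper's.

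For the existence of $\hat j$, the paper proves the analogue (Proposition \ref{Kr11}) by an induction on $s$ whose inductive step requires a case analysis (on whether the relevant unit $u'_i+z_i$ has $z_i$ or $u'_i$ a unit, whether $u_1$ is a unit, etc.) and culminates in a pigeonhole-type count over $\bfF^{\times}$; your argument is non-inductive. You package the lifts $\tilde j_1,\dots,\tilde j_s$ into an $\Oo$-linear family, reduce to a unit-avoidance problem over the common residue field $\bfF_T$ (correctly using that all $\varphi_k$ are surjective local maps, so all $T_k$ have the same residue field), and then use a projective-space union bound. The observation that the ``bad'' loci $H_k$ are subspaces through the origin, so one can count in $\mathbf P(\bfF^s)$ rather than in $\bfF^s$, is exactly what recovers the sharp hypothesis $\#\bfF^{\times}\geq s-1$ rather than the weaker $\#\bfF^{\times}\geq s$; you also correctly handle the possibility $[\bfF_T:\bfF]>1$, where $H_k$ may have codimension $>1$, by only using the upper bound $\#\mathbf P(H_k)\leq (q^{s-1}-1)/(q-1)$.

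For the deduction of the inequality from $\hat j$, the paper's Lemma \ref{principallemma} computes Fitting ideals of $\Oo^{n}/T$, $\Oo^{n}/\alpha T$, etc.\ via a diagonal matrix argument. You instead use the snake lemma applied to the inclusion of $0\to J\to T\to T/J\to 0$ into $0\to\prod J_i\to\prod T_i\to\prod T_i/\prod J_i\to 0$ to get the exact identity
$\#(T/J)\cdot\#\bigl(\prod T_i/T\bigr)=\bigl(\prod_i\#T_i/J_i\bigr)\cdot\#\bigl(\prod J_i/J\bigr)$,
and then the multiplication-by-$\hat j$ map $\prod T_i\to\prod J_i$ descends to a surjection $\prod T_i/T\twoheadrightarrow\prod J_i/J$ (using $\hat j T\subseteq J$), giving $\#\prod T_i/T\geq\#\prod J_i/J$ and hence the conclusion. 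This snake-lemma reframing is arguably more conceptual than the Fitting ideal computation: it shows that the deviation from equality is exactly measured by $\#\bigl(\prod T_i/T\bigr)\big/\#\bigl(\prod J_i/J\bigr)$, a comparison of two ``congruence modules,'' and makes clear that equality (as in Proposition \ref{principalJ}) holds whenever one can choose $\hat j$ generating $J$ itself. Both proofs are valid; yours replaces the induction and the Fitting-ideal computation with, respectively, a finite-geometry count and a homological-algebra identity.
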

\begin{rem} \label{strict} Note that the inequality in Theorem \ref{Kr12} may be strict. Consider for example $T=\{(a,b)\in \Oo \times \Oo \mid a \equiv b \pmod{\varpi}\} \subset \Oo \times \Oo=A$ with $A_i=\Oo$ ($i=1,2$). Let $J=\{(\varpi a, \varpi b) \in \Oo \times \Oo\mid a,b \in \Oo\}$ be the maximal ideal of $T$. Then $T/J\cong T_1/J_1\cong T_2/J_2 \cong \Oo/\varpi$. Let us also note that Theorem \ref{Kr12} is false if $J$ is only assumed to be an $\Oo$-submodule of $T$, but not an ideal. Indeed, let $T$ be as above and $J=\{(a,b)\in \Oo \times \Oo \mid a \equiv b \pmod{\varpi^2}\}$. Then $T/J \cong \Oo/\varpi$, but $T_1/J_1\cong T_2/J_2\cong 0$.
 \end{rem} 
As explained in Remark \ref{strict} the inequality in Theorem \ref{Kr12} may be strict, however this is not the case when $J$ itself is principal. For principal $J$ one not only obtains equality, but also the assumption on the residue field is unnecessary. Let us state this as a separate proposition. 
\begin{prop} \label{principalJ} If $J$ is principal, then $\# \prod_{i=1}^s T_i/J_i = \# T/J$.\end{prop}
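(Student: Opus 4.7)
The plan is to express both $\# T/J$ and $\# \prod_i T_i/J_i$ as orders of cokernels of multiplication-by-$x$ maps, and then use the fact that the determinant of an $E$-linear endomorphism is invariant under the choice of a stable full-rank $\Oo$-lattice inside $A \otimes_\Oo E$.

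Since $\Oo$ is a PID and $T$ is an $\Oo$-submodule of $A = \Oo^n$ of full rank, $T$ is free of rank $n$ as an $\Oo$-module; similarly each $T_i$ is free of rank $n_i$. First I would write $J = xT$ for some generator $x \in T$ and set $x_i := \varphi_i(x)$; the surjection $\varphi_i \colon T \twoheadrightarrow T_i$ then forces $J_i = x_i T_i$. Because $J$ has finite index in $T$, multiplication by $x$ on $T$ is injective, and the elementary divisor theorem gives
\bes
\# T/J = \#\bigl(\Oo/\det(m_x)\bigr), \qquad \# T_i/J_i = \#\bigl(\Oo/\det(m_{x_i})\bigr),
\ees
where $m_x$ (resp.\ $m_{x_i}$) denotes multiplication by $x$ on $T$ (resp.\ by $x_i$ on $T_i$).

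The heart of the argument is then the multiplicative identity
\bes
\det(m_x) = \prod_{i=1}^s \det(m_{x_i}).
\ees
To establish this I would pass to the $E$-vector space $V := A \otimes_\Oo E \cong E^n$, which decomposes as $V = \bigoplus_i (A_i \otimes_\Oo E)$. Multiplication by $x$ preserves this decomposition and acts on the $i$-th summand as $m_{x_i}$, whence $\det_V(m_x) = \prod_i \det(m_{x_i})$. Now both $T$ and $\prod_i T_i$ are full-rank $\Oo$-lattices in $V$ that are stable under $m_x$, and the determinant of an $E$-linear endomorphism of $V$ is independent of the $E$-basis used to compute it; hence $\det(m_x)$ computed on $T$ agrees with $\det_V(m_x)$, and equals $\prod_i \det(m_{x_i})$.

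Substituting back yields $\# T/J = \prod_i \#\bigl(\Oo/\det(m_{x_i})\bigr) = \# \prod_i T_i/J_i$, as required. The only genuine subtlety I foresee is the lattice-independence of the determinant, which however follows immediately from the similarity-invariance of determinants under conjugation by elements of $\GL_n(E)$; notably, the hypothesis on the residue field from Theorem \ref{Kr12} is no longer needed once $J$ itself is principal, because principality of $J$ converts the multiplicativity of determinants into an exact identity rather than an inequality.
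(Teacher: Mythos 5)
Your proof is correct, and it takes a somewhat different route from the paper's. The paper deduces Proposition \ref{principalJ} as an immediate corollary of Lemma \ref{principallemma}, whose proof is phrased in terms of Fitting ideals: one writes $T = B\Oo^n$, observes $\alpha T = AB\Oo^n$ with $A$ the diagonal matrix of the components of $\alpha$, obtains $\#T/\alpha T = \#\Oo/\det A$ from $\Fit_{\Oo}(\Oo^n/T) = (\det B)$ and $\Fit_{\Oo}(\Oo^n/\alpha T) = (\det AB)$, and then handles each factor $\#T_i/\alpha_i T_i$ by tracking how the $n_i \times n_i$ minors of the truncated presentation matrices scale. You instead work intrinsically with the multiplication maps $m_x\colon T\to T$ and $m_{x_i}\colon T_i\to T_i$, invoke Smith normal form to identify $\#T/xT$ with $\#\Oo/\det(m_x)$, and get the key factorization $\det(m_x) = \prod_i \det(m_{x_i})$ from the lattice-independence of determinants over $V = A\otimes_{\Oo}E$. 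The underlying computation is the same in both cases --- the "norm" of $x$ factors over the product $A = \prod A_i$ --- but your packaging avoids Fitting ideals and the explicit minor bookkeeping, which is a genuine simplification for this statement. The one thing your version does not recover is the extra generality of the paper's Lemma \ref{principallemma}: there one only assumes that each $J_i$ is principal and that some $\alpha\in J$ has $\alpha_i$ generating $J_i$ (so $J$ itself need not be principal, and one only gets $\#T/J \le \#T/\alpha T = \prod_i \#T_i/J_i$), and that weaker hypothesis is exactly what is needed later in the proof of Theorem \ref{Kr12}. Since you were only asked to prove the proposition, that is not a defect, but it is worth knowing why the paper bothers to isolate the lemma.
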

We prepare the proofs of Theorem \ref{Kr12} and Proposition \ref{principalJ} by two lemmas and the proposition will follow from these alone, while the proof of the Theorem is more difficult. 


 If $M$ is any finitely presented  $\Oo$-module, we will write ${\rm Fit}_{\Oo}(M)$ for its Fitting ideal (see for example \cite{MazurWiles84}, Appendix for a treatment of Fitting ideals). 
\begin{lemma} \label{fit} Let $M$ be an $\Oo$-module of finite cardinality.  Then $\# M  = \# \Oo/{\rm Fit}_{\Oo}(M)$. \end{lemma}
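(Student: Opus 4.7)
The plan is to reduce the statement to the elementary divisor theorem. Since $M$ has finite cardinality, it is in particular a finitely generated torsion $\Oo$-module, and $\Oo$ is a DVR (hence a PID). Thus by the structure theorem I can write
$$M \;\cong\; \bigoplus_{j=1}^{r} \Oo/(\varpi^{a_j})$$
for a unique multiset of positive integers $a_1, \ldots, a_r$. This already yields $\# M = q^{a_1 + \cdots + a_r}$, where $q := \# \bfF$.

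Next I would compute the Fitting ideal directly from this decomposition. The short exact sequence
$$0 \;\longrightarrow\; \Oo^{r} \;\xrightarrow{\;\diag(\varpi^{a_1}, \ldots, \varpi^{a_r})\;}\; \Oo^{r} \;\longrightarrow\; M \;\longrightarrow\; 0$$
is a (square) free presentation of $M$. By the definition recalled in \cite{MazurWiles84}, Appendix, the zeroth Fitting ideal ${\rm Fit}_{\Oo}(M)$ is the ideal of $\Oo$ generated by the $r\times r$ minors of the presentation matrix; for a square diagonal matrix this collapses to the single generator given by the determinant. Hence
$${\rm Fit}_{\Oo}(M) \;=\; \bigl(\varpi^{\,a_1 + \cdots + a_r}\bigr),$$
so that $\# \Oo / {\rm Fit}_{\Oo}(M) = q^{a_1 + \cdots + a_r} = \# M$, as desired.

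There is essentially no hard step here; the only thing to be careful about is appealing to the standard fact (which is what one really needs from the theory of Fitting ideals) that the Fitting ideal is independent of the chosen presentation, so that we are free to use the convenient presentation supplied by the structure theorem. If desired, one could instead give a short induction on the length of $M$, using that for a short exact sequence $0 \to M' \to M \to M'' \to 0$ one has ${\rm Fit}_{\Oo}(M') \cdot {\rm Fit}_{\Oo}(M'') \subseteq {\rm Fit}_{\Oo}(M)$, together with the base case ${\rm Fit}_{\Oo}(\Oo/(\varpi^{a})) = (\varpi^{a})$; over a DVR the containment becomes an equality for length reasons, yielding the same conclusion.
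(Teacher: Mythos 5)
Your proof is correct and is the standard argument one has in mind; the paper itself only remarks that the lemma ``is easy'' and supplies no details, so there is nothing to contrast it with. Reducing to the structure theorem over the DVR $\Oo$, reading off $\#M$ from the elementary divisors, and computing ${\rm Fit}_{\Oo}(M)$ as the determinant of the diagonal presentation matrix is exactly the intended reasoning.
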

\begin{proof} This is easy. \end{proof}
\begin{lemma} \label{principallemma} Let $\alpha = (\alpha_1, \alpha_2, \dots, \alpha_s) \in J$ be such that $\alpha_i$ generates $J_i$ as an ideal of $T_i$ for each $1 \leq i \leq s$. Then $\#T/J \leq \# T/\alpha T=\# \prod_{i=1}^s T_i/\alpha_iT_i$. \end{lemma}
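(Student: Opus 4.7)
The inequality $\#T/J \le \#T/\alpha T$ is immediate since $\alpha \in J$ implies $\alpha T \subseteq J$, and so we get a surjection $T/\alpha T \twoheadrightarrow T/J$. Thus the heart of the matter is the equality
\[
\#T/\alpha T \;=\; \prod_{i=1}^s \#T_i/\alpha_i T_i .
\]

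My plan is to reduce both sides to determinants of multiplication-by-$\alpha$ on free $\Oo$-modules. First I would observe that each $T_i$ is a finitely generated torsion-free $\Oo$-module, hence free, of rank $n_i$ because $T_i \subset A_i = \Oo^{n_i}$ has full $\Oo$-rank (this latter point needs a short argument: $T$ has full rank in $A$ by hypothesis, and $\varphi_i$ is surjective onto $T_i$). Likewise $T$ is free of rank $n$. Since $T_i/\alpha_iT_i$ is finite (as $J_i=\alpha_iT_i$ has finite index), multiplication by $\alpha_i$ on $T_i\cong\Oo^{n_i}$ must be injective, so $\alpha_i$ is a non-zero-divisor in $T_i$; the same conclusion for $\alpha$ on $T$ then follows from $T\hookrightarrow \prod_i T_i$. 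In particular $T/\alpha T$ is finite, so both sides of the claimed equality make sense.

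Next, using Lemma~\ref{fit} and the fact that the Fitting ideal of the cokernel of an endomorphism of a free $\Oo$-module is generated by its determinant, I would write
\[
\#T/\alpha T = \#\Oo/\bigl(\det_{\Oo}(\cdot\alpha \mid T)\bigr), \qquad
\#T_i/\alpha_iT_i = \#\Oo/\bigl(\det_{\Oo}(\cdot\alpha_i \mid T_i)\bigr).
\]
The last step is to show $\det_{\Oo}(\cdot\alpha \mid T) = \prod_i \det_{\Oo}(\cdot\alpha_i \mid T_i)$ up to units. This is the step that requires a little care, since the natural basis of $T$ is not a block-diagonal combination of bases of the $T_i$ (indeed $T$ need not equal $\prod_i T_i$). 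The trick is to base change to $E$: because $T$ has full $\Oo$-rank in $A$, we have $T\otimes_{\Oo} E = A\otimes_{\Oo} E = \prod_i (A_i\otimes_{\Oo} E)$, and multiplication by $\alpha$ is block diagonal with blocks multiplication by $\alpha_i$ on $A_i\otimes E$. Hence
\[
\det_{E}(\cdot\alpha \mid T\otimes E) = \prod_i \det_{E}(\cdot\alpha_i \mid A_i\otimes E),
\]
and the same full-rank argument applied to $T_i\subset A_i$ identifies each factor with $\det_{E}(\cdot\alpha_i\mid T_i\otimes E) = \det_{\Oo}(\cdot\alpha_i\mid T_i)$. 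The left-hand side is $\det_{\Oo}(\cdot\alpha\mid T)$, and multiplicativity of $\#\Oo/(\cdot)$ on products of nonzero elements of $\Oo$ yields the desired equality.

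The only real subtlety is the last paragraph, namely legitimately matching the $\Oo$-determinant on $T$ with the product of $\Oo$-determinants on the $T_i$; everything else is bookkeeping once the freeness and non-zero-divisor facts are in place.
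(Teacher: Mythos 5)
Your proof is correct and reaches the same computational heart as the paper's: both sides of the equality are identified with $\#\Oo$ modulo a product of the coordinates $\beta^i_j$ of $\alpha$ in the standard basis of $A=\Oo^n$, and the statement then follows from multiplicativity of orders in $\Oo$. The packaging, though, is genuinely different. The paper fixes a matrix $B\in M_n(\Oo)$ with $T=B\Oo^n$ and the diagonal matrix $A=\diag(\beta^i_j)$, computes $\Fit_{\Oo}(\Oo^n/T)=(\det B)$ and $\Fit_{\Oo}(\Oo^n/\alpha T)=(\det AB)$ as cokernels of $B$ and $AB$, and divides to get $\#T/\alpha T = \#\Oo/(\det A)$; on the projected side it compares $n_i\times n_i$ minors of $A_iB$ with those of $B_i$ row by row. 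You instead compute $\det_{\Oo}(\cdot\alpha\mid T)$ and $\det_{\Oo}(\cdot\alpha_i\mid T_i)$ intrinsically and use the base change $T\otimes_{\Oo}E = A\otimes_{\Oo}E=\prod_i(A_i\otimes_{\Oo}E)$ to see that multiplication by $\alpha$ is block diagonal with blocks $\cdot\alpha_i$, so that the determinant factors, and likewise each $\det(\cdot\alpha_i\mid T_i)=\det(\cdot\alpha_i\mid A_i\otimes E)$. This entirely bypasses the explicit minor-by-minor comparison of $A_1B$ with $B_1$, which is the fiddliest part of the paper's argument, at the cost of having to spell out freeness of $T$ and the $T_i$ and non-zero-divisor facts (which the paper leaves implicit). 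Both approaches invoke the full-rank hypothesis in the same way, and both rely on Lemma~\ref{fit}; your version is the more conceptual account of the same determinant computation.
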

\begin{proof} The first inequality is clear. 
Let $B \in M_n(\Oo)$ be such that $T=B\Oo^n$. Write $\alpha_i = (\beta_1^i, \beta_2^i, \dots, \beta_{n_i}^i)\in A_i$. Let $$A=\diag(\beta^1_1, \beta^1_2, \dots, \beta^1_{n_1}, \beta^2_1, \beta^2_2, \dots, \beta^2_{n_2}, \dots, \beta^s_1, \beta^s_2, \dots, \beta^s_{n_s})\in\Oo^n.$$ Note that $AT=\alpha T=AB\Oo^n$. Thus, we have ${\rm Fit}_{\Oo}(\Oo^n/T) = (\det B) \Oo$ and ${\rm Fit}_{\Oo}(\Oo^n/\alpha T) = (\det AB)\Oo$. It follows that $$\# T/\alpha T = \#\Oo/\det A = \# \Oo/\left(\prod_{i=1}^s\prod_{j=1}^{n_i} \beta_j^i\right).$$

Now let us compute $\#T_1/\alpha_1 T_1$. For a matrix $M \in M_n(\Oo)$ write $M_1$ for the first $n_1$ rows of $M$. Note that $T_1=B_1\Oo^n$ and $\alpha_1 T_1 = A_1 T_1 = A_1B \Oo^n$. We have ${\rm Fit}_{\Oo}(\Oo^{n_1}/T_1) = {\rm Fit}_{\Oo}(\Oo^{n_1}/B_1 \Oo^n)$ and ${\rm Fit}_{\Oo}(\Oo^{n_1}/J_1) = {\rm Fit}_{\Oo}(\Oo^{n_1}/A_1B \Oo^n)$. Note that every entry in any row (say $j$th row) of $A_1B$ equals the corresponding entry in the $j$th row of $B_1$ times $\beta^1_j$. Thus the determinant of every $n_1 \times n_1$ minor of $A_1B$ equals the determinant of the corresponding minor of $B_1$ times $\prod_{j=1}^{n_1} \beta^1_j$. Hence by the definition of the Fitting ideals we get $${\rm Fit}_{\Oo}(\Oo^{n_1}/J_1)=\left(\prod_{j=1}^{n_1} \beta^1_j\right)  {\rm Fit}_{\Oo}(\Oo^{n_1}/T_1).$$ Thus we conclude that $\#T_1/\alpha_1 T_1 =\#\Oo/\prod_{j=1}^{n_1} \beta^1_j$.  Analogous argument works for all $1<i\leq s$. \end{proof}
\begin{proof} [Proof of Proposition \ref{principalJ}] Take $\alpha$ in Lemma \ref{principallemma} to be a generator of $J$. \end{proof}

\begin{proof}[Proof of Theorem \ref{Kr12}]
Assume that each $J_i$ is principal. Let $\alpha_i$ be any generator of $J_i$. Note that $\alpha_1$ is not a zero divisor in $T_1$ since it is of the form $(a_1, \dots, a_{n_1})$ with $a_i\in \Oo$. If $\alpha_1$ was to be a zero-divisor one of the $a_i$ must be zero, but then $J_1$ is not of finite index. Also $\alpha_i$ is not a zero-divisor in $T_i$ for $i>1$. 
\begin{prop} \label{Kr11} Assume $\# \bfF^{\times} \geq s-1$. There exists $\alpha_i \in J_i$ such that $(\alpha_1, \dots, \alpha_s) \in J$ and each $\alpha_i$ generates $J_i$. \end{prop}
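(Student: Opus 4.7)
The plan is to build $\alpha$ as an $\Oo$-linear combination of fixed elements $\gamma_1,\dots,\gamma_s \in J$ chosen so that $\varphi_j(\gamma_j)$ generates $J_j$; such $\gamma_j$ exist because $J_j = \varphi_j(J)$. I seek coefficients $c_1,\dots,c_s \in \Oo$ such that $\alpha := \sum_{j=1}^s c_j \gamma_j$ projects, under every $\varphi_i$, to a generator of $J_i$. The condition on the coefficients will turn out to be a simultaneous nonvanishing condition in the residue fields, to which the classical ``not a union of few proper subspaces'' lemma applies.

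To make this precise, for each pair $(i,j)$ I would write $\varphi_i(\gamma_j) = \lambda_{ij}\, \varphi_i(\gamma_i)$ with $\lambda_{ij} \in T_i$, using that $\varphi_i(\gamma_j) \in J_i = (\varphi_i(\gamma_i))$; since the generator $\varphi_i(\gamma_i)$ is not a zero-divisor (as already noted just before the statement), the coefficient $\lambda_{ij}$ is unique, and in particular $\lambda_{ii} = 1$. Then
\[
\varphi_i(\alpha) \;=\; \Bigl( \sum_{j=1}^s c_j \lambda_{ij} \Bigr) \varphi_i(\gamma_i),
\]
so $\varphi_i(\alpha)$ generates $J_i$ if and only if $\sum_j c_j \lambda_{ij}$ is a unit in the local ring $T_i$, equivalently its image $\sum_j \bar c_j \bar\lambda_{ij}$ in the residue field $k_i := T_i/\fm_i$ is nonzero, where $\bar{\phantom{x}}$ denotes reduction modulo the maximal ideal (extending the reduction $\Oo \twoheadrightarrow \bfF$, so that $\bar c_j \in \bfF \subseteq k_i$).

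The task therefore reduces to exhibiting $(\bar c_1,\dots,\bar c_s) \in \bfF^s$ lying outside each of the $s$ subsets
\[
V_i \;=\; \Bigl\{\, (\bar c_j) \in \bfF^s : \sum_{j=1}^s \bar c_j \bar\lambda_{ij} = 0 \text{ in } k_i \,\Bigr\}.
\]
Each $V_i$ is the kernel of an $\bfF$-linear map $\bfF^s \to k_i$, which is nonzero because $\bar\lambda_{ii} = 1$; hence $V_i$ is a proper $\bfF$-subspace of $\bfF^s$. The main ingredient is then the standard lemma that a vector space over a field with at least $n$ elements is not covered by any $n$ proper subspaces. The hypothesis $\#\bfF^{\times} \geq s-1$, i.e.\ $\#\bfF \geq s$, is precisely what is needed to apply this with $n = s$, giving $\bfF^s \setminus \bigcup_i V_i \neq \emptyset$; any lift of such a tuple to $(c_1,\dots,c_s) \in \Oo^s$ produces the required $\alpha$. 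The one non-trivial step is recognizing the sharpest form of the subspace-covering lemma and checking that the hypothesis on $\#\bfF^{\times}$ matches it exactly; the rest is bookkeeping.
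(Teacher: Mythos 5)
Your proof is correct, and it is a genuinely different and considerably cleaner route than the paper's. The paper proves this proposition by induction on $s$, constructing two auxiliary elements from the inductive hypothesis and then running a lengthy case analysis on whether various quantities are units, with a pigeonhole count over representatives of $\bfF^\times$ appearing only at the very end. You instead fix a single spanning set $\gamma_1,\dots,\gamma_s \in J$ with $\varphi_j(\gamma_j)$ generating $J_j$, express the desired element as $\alpha = \sum_j c_j\gamma_j$, and observe that the requirement ``$\varphi_i(\alpha)$ generates $J_i$'' is exactly the condition that the image of $\sum_j c_j\lambda_{ij}$ in $k_i=T_i/\fm_i$ be nonzero --- i.e.\ that $(\bar c_j)$ avoid a proper $\bfF$-subspace $V_i \subset \bfF^s$ (proper because $\bar\lambda_{ii}=1$). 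The subspace-covering lemma over $\bfF$, with the sharp hypothesis $\#\bfF \geq s$, then finishes it in one stroke. What your approach buys: it eliminates the induction entirely, localizes the combinatorial content in a single well-known lemma, and makes the role of the hypothesis $\#\bfF^\times \geq s-1$ transparent and manifestly tight. The key observations you rely on --- that each $T_i$ is local with residue field $k_i \supseteq \bfF$, that $\varphi_i(\gamma_i)$ is a non-zero-divisor so $\lambda_{ij}$ is well-defined, and that generating $J_i$ is a unit condition --- are all valid in the paper's setup. One minor remark: it is worth stating explicitly that the covering lemma requires $\dim_{\bfF}\bfF^s \geq 1$, which of course holds for $s\geq 1$; and the lemma in the sharp form (``a vector space over a field with at least $n$ elements is not a union of $n$ proper subspaces'') is precisely what matches $\#\bfF^\times\geq s-1$, so there is no slack in the hypothesis.
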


\begin{proof} We proceed by induction on $s$. The case $s=1$ is clear. Assume the statement is true for $s=n\geq 1$. Let $s=n+1$. Let $T'$ (resp. $J'$) be the image of $T$ (resp. $J$) under the projection $A \twoheadrightarrow \prod_{i=1}^n T_i$. Then by the inductive hypothesis we know there exists $(\alpha_1, \dots, \alpha_n) \in J'$ such that each $\alpha_i$ generates $J_i$. Since $J'$ is exactly the image of $J$ we know that there exists $x \in T_{n+1}$ such that  $(\alpha_1, \dots, \alpha_n, x) \in J$. By symmetry the inductive hypothesis also gives an element $(x', \beta_2, \dots, \beta_{n+1})\in J$ such that each $\beta_i$ generates $J_i$.

There exists $z_{n+1} \in T_{n+1}$ such  that $x=z_{n+1} \beta_{n+1}$. Lift $z_{n+1}$ to $(z_1, \dots, z_{n+1}) \in T$ and consider the difference:
\begin{multline*}(\alpha_1, \alpha_2,\dots, x) - (z_1, z_2,\dots, z_{n+1})\cdot (x', \beta_2, \dots, \beta_{n+1}) = \\
=(\alpha_1 - z_1 x', \alpha_2-z_2\beta_2, \dots, \alpha_n-z_n \beta_n,0).\end{multline*} There exists $u_1 \in T_1$ and (if $n>1$) $u_i, u'_i \in T_i$ ($i=2,3,\dots, n$) such that $$\alpha_1- z_1 x' = u_1 \alpha_1 \quad \textup{and} \quad \alpha_i-z_i\beta_i = u_i \alpha_i = u'_i\beta_i,\hf i=2,3,\dots, n.$$
This gives us $$z_1x'= (1-u_1) \alpha_1  \quad \textup{and} \quad z_i \beta_i = (1-u_i) \alpha_i\quad \textup{and} \quad \alpha_i=(u'_i+z_i)\beta_i, \hf i=2,3,\dots, n.$$

First let us assume that $n>1$ and consider the last set of equations first. It implies that $u'_i+z_i \in T_i^{\times}$ ($i=2,3,\dots, n$). This means either $u'_i$ or $z_i$ is a unit. If for any $i=2,3,\dots,n$ one has $z_i\in T_i^{\times}$, then $(z_1, z_2,\dots, z_{n+1}) \in T^{\times}$,  because a non unit cannot map to a unit under a local homomorphism. But then we get that $z_{n+1} \in T_{n+1}^{\times}$ and hence $x$ generates $J_{n+1}$, so $(\alpha_1, \alpha_2, \dots, \alpha_n,x)\in J$ has the property that all the coordinates generate the corresponding $J_i$ and this concludes the inductive step in this case.

If $z_i \not\in T_i^{\times}$ for any $i=2,3,\dots, n$, then $u'_i \in T_i^{\times}$ for all $i=2,3,\dots, n$. Then $\alpha_i-z_i \beta_i$ generates $T_i$ for all $i=2,3\dots, n$. 

Now, let us come back to the general case $n \geq 1$. 
First assume that  $u_1\in \fm_{T_1}$. Since $T_1$ is local and complete, this implies that $1-u_1 \in T_1^{\times}$ (cf. \cite{Eisenbud}, Proposition 7.10). Moreover, $x' \in J_1$, so there exists $z \in T_1$ such that $x'=z\alpha_1$. So, we have $$z_1 z \alpha_1 = z_1 x' = (1-u_1) \alpha_1.$$ Since $\alpha_1$ is a generator of $J_1$, we know it is a non-zero divisor, so, we can cancel it and get $$z_1z = 1-u_1 \in T_1^{\times}.$$ Thus both $z_1 \in T_1^{\times}$ and $z \in T_1^{\times}$. This implies that $J_1 = \alpha_1 T_1 = x' T_1$ (i.e., $x'$ also generates $J_1$). This implies that $(x', \beta_2, \dots, \beta_{n+1})\in J$ has the property that each coordinate generates the respective ideal (this  concludes the inductive step in this case). 
Now assume $u_1 \in T_1^{\times}$. Then $\alpha_1 - z_1 x'$ also generates $J_1$. 

This shows that either we have an element that satisfies the hypothesis of Proposition \ref{Kr11} or we have one consisting of generators of the ideals $J_i$ for $i\leq n$ on the first $n$ coordinates and zero on the last one. By symmetry we have proved the following lemma. 
\begin{lemma} \label{L1} Either there exists $(\alpha_1, \dots, \alpha_{n+1}) \in J$ such that $\alpha_iT_i=J_i$ for all $i$ or there exist elements $$a_1:=(0,\alpha^1_2,\dots, \alpha^1_{n+1}),a_2:= (\alpha^2_1, 0, \dots, \alpha^2_{n+1}),\dots, a_{n+1}:=(\alpha^{n+1}_1, \alpha^{n+1}_2, \dots, 0)\in J$$ such that $\alpha_i^jT_i=J_i$ for all $i\neq j$. \end{lemma}

Assume that there is no element $(\alpha_1, \dots, \alpha_{n+1}) \in J$ such that $\alpha_iT_i=J_i$ for all $i$. Then by Lemma \ref{L1} we get the elements described in the second part of the lemma. If $n=1$, then $a_1 + a_2\in J$ and each of its coordinates generates the corresponding $J_i$. This completes the inductive step in this case. 

Now assume $n>1$.  
 For every $2 \leq i \leq n$ consider the set $\Sigma_i=\{\alpha^1_i + u \alpha^{n+1}_i\}$, where $u\in \Oo^{\times}$ runs over the set of representatives $S$ of $\bfF^{\times}$. We claim that there exist at least $\#\bfF^{\times}-1$ elements of $\Sigma_i$ such that each of them generates $J_i$. Indeed, suppose that there exist $a,b \in S$ such that $\alpha^1_i + a \alpha^{n+1}_i=\nu_1$ and $\alpha^1_i + b \alpha^{n+1}_i=\nu_2$ and both $\nu_1$ and $\nu_2$ do not generate $J_i$. Then since $\nu_1-\nu_2$ also does not generate $J_i$, we get that $(a-b)\alpha^{n+1}_i$ does not generate $J_i$. However since $\alpha^{n+1}_i$ generates $J_i$, we must have that $\varpi\mid (a-b)$, hence $a=b$. 

We conclude that there are at least $\#\bfF^{\times}-(n-1)$ elements $u \in S$  such that the element $a_1 + u a_{n+1}\in J$ and each of its $n+1$ coordinates generates the corresponding $J_i$. This contradicts the assumption that no such element exists and concludes the inductive step in this last case. 
 \end{proof}
We are now in a position to complete the proof of Theorem \ref{Kr12}. Indeed, let $\alpha:=(\alpha_1, \dots, \alpha_s) \in J$ be as in Proposition \ref{Kr11}. Then $\alpha$ satisfies the assumptions of Lemma \ref{principallemma} and we conclude that $\# T/J \leq\#\prod_{i=1}^s T_i/J_i $ as claimed. \end{proof}

\section{Eisenstein congruences among elliptic modular forms} \label{s3}
We note the following application to Eisenstein congruences for elliptic modular forms:
Let $f_1, \ldots f_r$ be all weight 2  normalized simultaneous eigenforms for $\Gamma_0(N)$ for $N$ prime. Mazur proved in  \cite{Mazur78} Proposition 5.12(iii)) that if $p$ divides the numerator of $\frac{N-1}{12}$ then at least one of these forms  is congruent modulo $p$ to the Eisenstein series $E_2^*=1-N+24 \sum_{n+1}^{\infty} \sigma^*(n) q^n$ for $$\sigma^*(n)=\sum_{0<d\mid n, (d,N)=1} d.$$ 

From now on for the rest of the article fix an embedding $\ov{\bfQ}_p \hookrightarrow \bfC$.
For $1\leq i \leq r$ write $K_{f_i}$ for the (finite) extension of $\bfQ_p$ generated by the Hecke eigenvalues of $f_i$. 
Let $\mathcal{O}_N$ to be the ring of integers in the composition of all the coefficient fields $K_{f_i}$ and write $\varpi_N$ for a choice of uniformizer, $e_N$ for the ramification index of $\Oo_N$ over $\bfZ_p$ and $d_N$ for the degree of its residue field over $\bfF_p$. We have the following result regarding the exponents of the Eisenstein congruences modulo powers of $\varpi_N$:

\begin{prop}\label{Version 1}
For $i=1, \ldots, r$ let $\varpi_N^{m_i}$ be the highest power of $\varpi_N$ such that the Hecke eigenvalues of $f_i$ are congruent to those of $E_2^*$ modulo $\varpi_N^{m_i}$ for Hecke operators $T_{\ell}$ for all primes $\ell\nmid N$.
  Then $\frac{1}{e_N}(m_1+ \ldots +m_r)$
 is equal to the $p$-valuation of the numerator of $\frac{N-1}{12}$.
\end{prop}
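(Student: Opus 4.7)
The plan is to apply Proposition~\ref{principalJ} with $\mathcal{O} = \mathcal{O}_N$, $s = r$, and $n_i = 1$ for each $i$, taking $T$ to be the cuspidal Hecke algebra at level $\Gamma_0(N)$ base-changed to $\mathcal{O}_N$ and $J$ its Eisenstein ideal. The two crucial external inputs will be Mazur's computation of the order of the Hecke algebra modulo the Eisenstein ideal and his principality result for the Eisenstein ideal at the Eisenstein maximal ideal.

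\emph{Setup.} Let $\bfT$ be the $\bfZ$-subalgebra of $\mathrm{End}(S_2(\Gamma_0(N)))$ generated by $T_\ell$ for $\ell \nmid N$; it has $\bfZ$-rank $r$. Put $T := \bfT \otimes_{\bfZ} \mathcal{O}_N$. The system of Hecke eigenvalues of $f_i$ induces an $\mathcal{O}_N$-algebra homomorphism $\varphi_i : T \to \mathcal{O}_N$, and together these give an injection
$$\varphi = (\varphi_1, \ldots, \varphi_r) : T \hookrightarrow \prod_{i=1}^r \mathcal{O}_N =: A$$
of full $\mathcal{O}_N$-rank. Each $\varphi_i$ is surjective, since its image is an $\mathcal{O}_N$-subalgebra of $\mathcal{O}_N$, so $T_i = \mathcal{O}_N$. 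Let $J \subset T$ be the Eisenstein ideal, i.e.\ the image in $T$ of the annihilator of $E_2^*$ in the Hecke algebra acting on $M_2(\Gamma_0(N))$; concretely, $J = (T_\ell - (\ell+1) : \ell \nmid N)$.

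\emph{Computing the relevant orders.} Under $\varphi_i$ the ideal $J$ maps to $J_i = (a_\ell(f_i) - (\ell+1) : \ell \nmid N)\mathcal{O}_N$, which by the definition of $m_i$ equals $\varpi_N^{m_i}\mathcal{O}_N$. Writing $q := p^{d_N}$ for the size of the residue field of $\mathcal{O}_N$, this gives $\#(T_i/J_i) = q^{m_i}$. On the other hand, Mazur (\cite{Mazur78}, Proposition~II.9.7) proves that $\bfT \otimes \bfZ_p$ modulo its Eisenstein ideal is isomorphic to $\bfZ_p / p^{v_p(\mathcal{N})}\bfZ_p$, and tensoring over $\bfZ_p$ with $\mathcal{O}_N$ yields $\#(T/J) = q^{e_N v_p(\mathcal{N})}$.

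\emph{Principality and conclusion.} Mazur further shows (\cite{Mazur78}, Proposition~II.18.10) that the Eisenstein ideal in $\bfT \otimes \bfZ_p$, localized at the Eisenstein maximal ideal $\mathfrak{m}$, is principal. At any other maximal ideal $\mathfrak{m}'$, some generator $T_\ell - (\ell+1)$ lies outside $\mathfrak{m}'$, so the Eisenstein ideal is the unit ideal there. Since $\bfT \otimes \bfZ_p$ decomposes as the product of its localizations at maximal ideals, the Eisenstein ideal is globally principal in $\bfT \otimes \bfZ_p$, and base-changing to $\mathcal{O}_N$, so is $J \subset T$. Proposition~\ref{principalJ} then yields
$$q^{e_N v_p(\mathcal{N})} = \#(T/J) = \prod_{i=1}^r \#(T_i/J_i) = q^{m_1 + \cdots + m_r},$$
whence $m_1 + \cdots + m_r = e_N v_p(\mathcal{N})$, as required. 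The main difficulty is invoking Mazur's deep theorems on the structure of $\bfT \otimes \bfZ_p$; everything else---the scalar extension from $\bfZ_p$ to $\mathcal{O}_N$, which multiplies orders by $e_N$ in the exponent and preserves principality, together with the decomposition of the semilocal ring $\bfT \otimes \bfZ_p$---is routine.
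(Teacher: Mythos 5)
Your proof is correct and follows the same strategy as the paper---identify $T$, $J$, and the $T_i$, invoke Mazur's computation of the size of $\bfT_{\bfZ_p}/J_{\bfZ_p}$ and his principality theorem for the Eisenstein ideal, then apply Proposition~\ref{principalJ} and compare $p$-valuations. The one genuine difference is how principality is deployed: the paper localizes $\bfT_{\Oo}$ at the unique maximal ideal $\fm$ containing $J_{\Oo}$ (so that the $f_i$ with $m_i = 0$ simply drop out of the picture) and applies Proposition~\ref{principalJ} to the local ring $\bfT_{\Oo,\fm}$, whereas you keep the full semi-local algebra $T = \bfT\otimes_{\bfZ}\Oo_N$ and argue that $J$ is \emph{globally} principal there, using that $J$ is the unit ideal at every non-Eisenstein maximal ideal and that $\bfT_{\bfZ_p}$ factors as a product of its localizations. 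That observation is correct, but note that the hypotheses of Section~\ref{Commutative algebra} stipulate that $T$ be a local (complete) $\Oo$-subalgebra, so Proposition~\ref{principalJ} as stated does not literally apply to your non-local $T$. The argument is easily repaired---either by localizing at $\fm$ as the paper does, or by noting that the proof of Lemma~\ref{principallemma} (Fitting-ideal computations over $\Oo$) never uses locality---but as written you are invoking the proposition outside the setting in which it is stated.

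One further small omission: Mazur's Hecke algebra includes the operator $T_N$ while the algebra here is generated only by $T_\ell$, $\ell\nmid N$, so citing Mazur's order computation (II.9.6 in the paper's numbering, II.9.7 in yours) requires checking that adjoining $T_N$ changes nothing. The paper handles this by noting $T_N$ acts as the identity on the relevant localization (citing Calegari--Emerton and Mazur's Proposition~II.17.10); your writeup silently identifies the two algebras.
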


\begin{proof} Denote by $S_2(N)$ the $\bfC$-space of modular forms of weight 2 and level $\Gamma_0(N)$. For any subring $R\subset \bfC$ write $\bfT_R$ for the $R$-subalgebra of $\End_{\bfC}(S_2(N))$ generated by the Hecke operators $T_{\ell}$ for all primes $\ell \nmid N$. Let $J_R$ be the Eisenstein ideal, i.e., the ideal of $\bfT_R$ generated by  $T_{\ell}-(1+\ell)$ for $\ell \nmid N$. For a prime ideal $\fn$ of $\bfT_R$ write $\bfT_{R,\fn}$ for the localization of $\bfT_R$ at $\fn$ and set $J_{R, \fn}:= J_R \bfT_{R,\fn}$. 

Note that it follows from the definition of $J_R$ that the $R$-algebra structure map $R\to \bfT_R/J_R$ is surjective. Hence if $R$ is a local ring with maximal ideal $\fm_R$, then the ideal $J_R + \fm_R \bfT_R$ is the unique maximal ideal of $\bfT_R$ containing $J_R$. 
To ease notation in the proof write $\Oo$ for $\Oo_N$ and $\varpi$ for $\varpi_N$. Let $\fm$ be the unique maximal ideal of $\bfT_{\Oo}$ containing $J_{\Oo}$. 
Renumber $f_i$s if necessary so that $m_i>0$ for $0 \leq i \leq s\leq r$ and $m_i=0$ for $s < i \leq r$. Note that one has $\bfT_{\Oo}/J_{\Oo} \cong \bfT_{\Oo, \fm}/J_{\Oo, \fm}$. 

By Theorem II.18.10 in \cite{Mazur78}, the ideal $J$ is principal, hence we
apply Proposition \ref{principalJ} with $T=\bfT_{\Oo, \fm}$ and
let $T_i=\Oo$ 
(i.e., we take $n_1=n_2=\dots=n_s=1$ with $n_i$s as in section \ref{Commutative algebra}) and $\varphi_i:T \to T_i$ the map sending a Hecke operator to its eigenvalue corresponding to $f_i$. Set  $\fm_{\bfZ_p} = \fm \cap \bfZ_p$. One has $\bfT_{\Oo, \fm_{\Oo}} = \bfT_{\bfZ_p, \fm_{\bfZ_p}}\otimes_{\bfZ_p} \Oo$ (\cite{DDT}, Lemma 3.27 and Proposition 4.7) and $J_{\Oo, \fm} = J_{\bfZ_p, \fm_{\bfZ_p}}\otimes_{\bfZ_p} \Oo$. Since $\Oo/\bfZ_p$ is a flat extension, one has $$\bfT_{\Oo, \fm}/J_{\Oo, \fm} \cong \frac{ \bfT_{\bfZ_p, \fm_{\bfZ_p}}\otimes_{\bfZ_p} \Oo}{J_{\bfZ_p, \fm_{\bfZ_p}}\otimes_{\bfZ_p} \Oo} \cong ( \bfT_{\bfZ_p, \fm_{\bfZ_p}} / J_{\bfZ_p, \fm_{\bfZ_p}}) \otimes_{\bfZ_p} \Oo.$$ 
Thus $$\val_p(\# T_i/J_i) = 
\val_p\left(\#\Oo/\varpi^{m_i}\Oo \right)
= m_i d_N = m_i \frac{[\Oo:\bfZ_p]}{e_N}.$$
On the other hand $$\val_p(\# T/J) = \val_p\left(\# \frac{\bfT_{\bfZ_p, \fm_{\bfZ_p}}}{J_{\bfZ_p, \fm_{\bfZ_p}}}\otimes_{\bfZ_p}\Oo\right)=[\Oo: \bfZ_p] \val_p\left(\# \frac{\bfT_{\bfZ_p, \fm_{\bfZ_p}}}{J_{\bfZ_p, \fm_{\bfZ_p}}}\right)$$
 By \cite{Mazur78} Proposition II.9.6 we know that $\val_p\left(\# \frac{\bfT_{\bfZ_p, \fm_{\bfZ_p}}}{J_{\bfZ_p, \fm_{\bfZ_p}}}\right)$
equals the $p$-adic valuation of the numerator of $\frac{N-1}{12}$.  
[Indeed, note that while Mazur's Hecke algebra includes the operator $T_N$ and ours does not, it makes no difference since $T_N$ acts as identity. This was observed by Calegari and Emerton - see Proposition 3.19 of \cite{CalegariEmerton05} - and in fact  follows from Proposition II.17.10 of \cite{Mazur78}.] This implies that the $p$-valuation of the order of $T/J$ is given by the $p$-valuation of the numerator of $\frac{N-1}{12}$ times $[\Oo:\bfZ_p]$. By Proposition \ref{principalJ} we have  $\val_p(\# T/J) =\sum_{i=1}^s \val_p(T_i/J_i)$, hence the Proposition follows. \end{proof}


\begin{rem}
\begin{enumerate}\item  A very similar statement to that of the proposition was posed as Question 4.1 in 
\cite{Wiese10}. One of the consequences of Proposition \ref{Version 1} is an affirmative answer to that question.
\item Recently, such higher Eisenstein congruences were also investigated numerically by Naskr\k{e}cki \cite{Naskrecki12preprint} (but demanding congruence of all the Hecke eigenvalues). He proves an upper bound for the exponent of the congruence for particular cuspforms and conjectures a stronger one in the case that the coefficient field is ramified. We note that as opposed to Naskr\k{e}cki our set $f_1, \dots,
f_r$ includes all cuspidal eigenforms and not just representatives of
distinct Galois conjugacy classes. 
\end{enumerate}
\end{rem}

\begin{example}
A particular example that illustrates the statement of Proposition \ref{Version 1} can be taken from Section 19 of \cite{Mazur78}: Let $p=2$ and $N=113$. Mazur states that in this case $\val_p\left(\# \frac{\bfT_{\bfZ_p, \fm_{\bfZ_p}}}{J_{\bfZ_p, \fm_{\bfZ_p}}}\right)$=2 and ${\rm rank}_{\bfZ_p} (\bfT_{\bfZ_p, \fm_{\bfZ_p}})=3$.

One checks that there is one cuspform defined over $\bfZ_p$ congruent to the Eisenstein series modulo $p$ and that over a ramified quadratic extension of $\bfZ_p$ there are a further two Galois conjugate cuspforms congruent to the Eisenstein series modulo a uniformizer of that extension. Using the notation of Proposition \ref{Version 1} we therefore have $e_{113}=2, m_1=2, m_2=m_3=1$, and indeed $\frac{1}{2} (2+1+1)=2=\val_p \frac{113-1}{12}$.



\end{example}

\begin{example}
Other examples can be taken from \cite{Naskrecki12preprint}. In particular in Table 5 of [loc.cit.] Naskr\k{e}cki considers the following example: Let $p=5$ and $N=31$. In this case there are two Galois conjugate cuspforms over a ramified extension of degree 2, and $m_1=m_2=1$, so that $\frac{1}{2}(1+1)=1=\val_p \frac{31-1}{12}$. He also considers an example with $p=5$ and $N=401$. In this case there is only one Galois conjugacy class of newforms (containing two forms $f_1, f_2$) congruent to the Eisenstein series whose Fourier coefficients generate an order in the ring of integers of $K_{f_1}K_{f_2}$. In this case  $e_{401}=1$, and $m_1=m_2=1$. So one has $1+1=2=\val_p \frac{401-1}{12}$.
\end{example}

\section{Congruences - the general case} \label{Congruences - the general case}

The results of section \ref{Commutative algebra} can be applied in the context of congruences among automorphic forms on a general reductive group whenever there is  a $p$-integral structure on the Hecke algebra.
It is also worth noting that in many situations one does not know or even does not expect the ideal $J$ to be principal. In this section we introduce a general framework of working with such congruences and prove an analogue of Proposition \ref{Version 1} (Proposition \ref{mainprop}) which uses Theorem \ref{Kr12} instead of Proposition \ref{principalJ}. 

Let $p$ be a prime as before. 
Let $E$ be a finite extension of $\bfQ_p$. Write $\Oo$ for its valuation ring and $\varpi$ for a choice of a uniformizer. Let $e$ be the ramification index of $\Oo$ over $\bfZ_p$ and set $d:=[\Oo/\varpi \Oo: \bfF_p]$. 
Let $\mS_0$ be a (finite dimensional) $\bfC$-space whose elements are modular forms (or more generally automorphic forms for a reductive algebraic group). We assume that there is some naturally defined $\Oo$-lattice $\mS_{0, \Oo}$ inside $\mS_0$. Such a lattice may consist for example of forms with Fourier coefficients in $\Oo$ or of images of cohomology classes with coefficients in $\Oo$ under an Eichler-Shimura-type isomorphism. 

Let $\bfT_0 \subset \End_{\Oo}\mS_{0,\Oo}$ be a commutative $\Oo$-subalgebra of endomorphisms which can be simultaneously diagonalized over $E$. Write $\Pi_0$ for the set of systems of eigenvalues of $\bfT_0$, i.e., for the set of $\Oo$-algebra homomorphisms $\lambda: \bfT_0 \twoheadrightarrow \Oo$. Then we can identify $\bfT_0$ with an $\Oo$-subalgebra of $\Oo^{\# \Pi_0}$ of finite index in a natural way.


Let $\lambda_0$ be a fixed element of $\Pi_0$. Set $\Pi:= \Pi_0 \setminus \{\lambda_0\}$.  
Let $\bfT$ be the image of $\bfT_0$ under the projection $\varphi_0:\prod_{\lambda \in \Pi_0} \Oo \twoheadrightarrow \prod_{\lambda \in \Pi} \Oo$.
If necessary we extend $E$ further so that $\# (\Oo/\varpi \Oo)^{\times} \geq \# \Pi - 1$. 
Let $\pi_0 \in \mS_{0, \Oo}$ be an eigenform corresponding to $\lambda_0$, i.e., such that $T\pi_0 = \lambda_0(T) \pi_0$ for every $T \in \bfT_0$.
Let $J \subset \bfT$ be the ideal $\varphi_0(\Ann_{\bfT_0}\pi_0)$.

\begin{rem} \label{multone} Note that the quotient $\bfT/J$ is finite. Indeed,
for every $\lambda \neq \lambda_0$ there is $T_{\lambda} \in J$ such that $\lambda(T_{\lambda}) \neq 0$. Since $\lambda(\bfT)\cong\Oo$, we conclude that $\#\lambda(\bfT)/\lambda(J)<\infty$. Note that it follows from the definition of $J$ that the structure map $\iota: \Oo \to \bfT/J$ is surjective. Hence if $\bfT/J$ is infinite, $\iota$ must be an isomorphism. However, the (finitely many) maps $\lambda\in \Pi$ account for all $\Oo$-algebra maps from $\bfT$ to $\Oo$.
 Hence there must exist $\lambda\neq \lambda_0$ such that $\lambda$ factors through $\bfT/J$ which implies that $\lambda(J)=0$. This contradicts finiteness of $\lambda(\bfT)/\lambda(J)$. \end{rem}
\begin{rem}\label{Ghate} The quotient $\bfT/J$ measures $p$-adic congruences between Hecke eigenvalues of an eigenform corresponding to $\lambda_0$ and automorphic eigenforms corresponding to $\lambda \in \Pi$. It can be interpreted in terms of the congruence module $C(\bfT_0)$ as defined in \cite{Ghate02cong}, section 1. Indeed, the Hecke algebra $\bfT_0$ decomposes over $E$ as $\bfT_0 \otimes_{\Oo} E = X \oplus Y$ with $X=\lambda_{0}(\bfT_0)\otimes_{\Oo}E=E$ and $Y=\varphi_0(\bfT_0)\otimes_{\Oo}E=\bfT\otimes_{\Oo}E$. If we denote the corresponding projections as $\pi_X$ and $\pi_Y$ and set $\bfT^X:= \pi_X(\bfT_0)=\Oo$, $\bfT^Y=\pi_Y(\bfT_0)=\bfT$, $\bfT_X:= \bfT_0 \cap X$ and $\bfT_Y=\bfT_0\cap Y = 
J$ 
then the congruence module  $C(\bfT_0)$ is defined as $\frac{\bfT^X \oplus \bfT^Y}{\bfT}$. By Lemma 1 in [loc.cit.] we further know that $\bfT/J = \bfT^Y/\bfT_Y$ is isomorphic to $C(\bfT_0)$. 
\end{rem}
\begin{prop} \label{mainprop} 
For every $\lambda \in \Pi$ write $m_{\lambda}$ for the largest integer such that $\lambda_{0}(T) \equiv \lambda(T)$ mod $\varpi^{m_{\lambda}}$ for all $T \in \bfT_0$. 
Then \be \label{41} \frac{1}{e}\cdot \sum_{\lambda \in \Pi} m_{\lambda} \geq \val_p(\#\bfT/J).\ee
If $J$ is principal, then (\ref{41}) becomes an equality. \end{prop}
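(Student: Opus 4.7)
The plan is to specialize Theorem \ref{Kr12} (or, when $J$ is principal, Proposition \ref{principalJ}) to $s = \#\Pi$ with every $n_i = 1$. The residue-field hypothesis $\#\bfF^\times \geq s - 1$ is arranged by construction, and the finiteness of $\bfT/J$ required for the commutative-algebra result is furnished by Remark \ref{multone}. Under this specialization the factor rings of Section \ref{Commutative algebra} become $T_\lambda := \lambda(\bfT) \subset \Oo$, and since every $\lambda : \bfT_0 \to \Oo$ is an $\Oo$-algebra map it is surjective, so $T_\lambda = \Oo$ and each image ideal $J_\lambda := \varphi_\lambda(J)$ is automatically principal.

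The first step will be to identify $J_\lambda$ with $\varpi^{m_\lambda}\Oo$. Because $T\pi_0 = \lambda_0(T)\pi_0$ for every $T \in \bfT_0$, the annihilator of $\pi_0$ in $\bfT_0$ is precisely $\ker \lambda_0$, so $J = \varphi_0(\ker \lambda_0)$ and consequently $J_\lambda = \lambda(\ker \lambda_0)$. Writing any $T \in \bfT_0$ as $(T - \lambda_0(T)\cdot 1) + \lambda_0(T)\cdot 1$, the first summand lies in $\ker \lambda_0$ and its $\lambda$-image equals $\lambda(T) - \lambda_0(T)$. Hence as $T$ varies over $\bfT_0$ the differences $\lambda(T) - \lambda_0(T)$ sweep out exactly $\lambda(\ker\lambda_0) = J_\lambda$, and by the defining property of $m_\lambda$ the ideal they generate is $\varpi^{m_\lambda}\Oo$.

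With this translation available, Theorem \ref{Kr12} will yield $\prod_{\lambda \in \Pi} \# T_\lambda/J_\lambda \geq \# \bfT/J$, and Proposition \ref{principalJ} will upgrade the inequality to equality when $J$ is principal. Taking the appropriately normalized $p$-adic valuations---using $\# T_\lambda/J_\lambda = \# \Oo/\varpi^{m_\lambda}\Oo$ together with the identity $[\Oo:\bfZ_p] = ed$ relating the ramification index and residue degree---will convert this into the claimed bound $\frac{1}{e}\sum_\lambda m_\lambda \geq \val_p(\#\bfT/J)$, and into an equality in the principal case. The hard part has really already been done in Section \ref{Commutative algebra}; the only genuine content here is the identification $J_\lambda = \varpi^{m_\lambda}\Oo$ linking the congruence exponents to the image ideals, so no further obstacle arises.
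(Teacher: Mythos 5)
Your key computation --- that $\Ann_{\bfT_0}\pi_0=\ker\lambda_0$ and hence $J_\lambda=\lambda(\ker\lambda_0)=\varpi^{m_\lambda}\Oo$ --- is correct and is indeed the content that links the congruence exponents to the commutative-algebra setup. But there is a genuine gap in the way you invoke Theorem \ref{Kr12}: that theorem (and the setup of Section \ref{Commutative algebra}) requires $T$ to be a \emph{local} complete $\Oo$-subalgebra of full rank, and you apply it directly to $T=\bfT\subset\prod_{\lambda\in\Pi}\Oo$. The ring $\bfT$ is in general semilocal, not local; it has one maximal ideal for each mod-$\varpi$ reduction occurring among the $\lambda\in\Pi$. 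The locality hypothesis is not cosmetic --- the proof of Proposition \ref{Kr11} uses it repeatedly (e.g.\ ``a non-unit cannot map to a unit under a local homomorphism'' and the fact that $1-u_1\in T_1^\times$ when $u_1\in\fm_{T_1}$).

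The paper's proof handles this by first observing that the surjectivity of $\Oo\to\bfT/J$ forces $J$ to lie in a \emph{unique} maximal ideal $\fm\subset\bfT$, then localizing and applying Theorem \ref{Kr12} to $T=\bfT_\fm$, which is local and complete and sits of full rank inside $\prod_{i=1}^s\Oo$ where $\lambda_1,\dots,\lambda_s$ are exactly the characters factoring through $\bfT_\fm$. The characters $\lambda$ that do \emph{not} factor through $\bfT_\fm$ have $m_\lambda=0$ (their reduction mod $\varpi$ disagrees with that of $\lambda_0$), so they contribute nothing to the sum, and $\bfT_\fm/J_\fm\cong\bfT/J$ so the right-hand side is unchanged. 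To repair your proof you need to insert exactly this localization step before invoking Theorem \ref{Kr12}; once you do, the rest of your argument goes through.
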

\begin{proof} By our assumption on $E$ the residue field condition in Theorem \ref{Kr12} is satisfied.
Recall that the structure map $\Oo \to \bfT/J$ 
is surjective. 
Hence as in the proof of Proposition \ref{Version 1} it follows that there exists a unique maximal ideal $\fm\subset \bfT$ 
containing $J$. 
 Write $\bfT_{\fm}$ for the localization of $\bfT$ at $\fm$.  Number the elements of $\Pi$ as $\lambda_1, \lambda_2, \dots, \lambda_r$. By renumbering the $\lambda_i$s we may assume that for $i=1,2,\dots, s\leq r$ the map $\lambda_{i}: \bfT \twoheadrightarrow \Oo$ factors through $\bfT_{\fm}$.  Then $m_{\lambda_i}=0$ for $s<i\leq r$. Now the Proposition follows from Theorem \ref{Kr12} (or Proposition \ref{principalJ} if $J$ is principal) upon taking $T=\bfT_{\fm}$, $T_i=\Oo$, $i=1,2,\dots, s$ 
 (i.e, by taking $n_1=n_2=\dots=n_s=1$ in section \ref{Commutative algebra}), $\varphi_i=\lambda_{i}: T \twoheadrightarrow T_i$. \end{proof}

\section{Further examples of applications to congruences between automorphic forms} \label{s5}

In this section we will present a few examples where our general result can be applied. We will use notation introduced in section \ref{Congruences - the general case}. 

\begin{example}[More general Eisenstein congruences]
Let $p$ be an odd prime and $\chi$ a Dirichlet character of order prime to $p$. Let the conductor of $\chi$ be $Np^r$ with $(N,p)=1$ (so $r=0$ or $1$). For each integer $k\geq 2$ such that $\chi(-1)=(-1)^k$ let $S_k(Np, \chi)$ be the space of cuspforms of weight $k$, level $Np$ and character $\chi$. 
Let $\mS_0$ be the complex vector space of modular forms $S_k(Np, \chi)  \oplus \bfC {\rm E}_{\chi}$, where $E_{\chi} \in M_k(Np,\chi)$ is the Eisenstein series with Hecke eigenvalues $1+\chi(\ell) \ell^{k-1}$ for $\ell \nmid Np$.

Let $\bfT_0$ be the $\Oo$-subalgebra of endomorphisms of $S_k(Np, \chi)  \oplus \bfC {\rm E}_{\chi}$  generated by the classical Hecke operators $T_{\ell}$ for $\ell \nmid Np$. 
Write $\lambda_0$ for the Hecke eigenvalue character corresponding to ${\rm E}_{\chi}$. Then $J$ is given by the Eisenstein ideal generated by $T_{\ell}-1-\chi(\ell)\ell^{k-1}$ for $\ell \nmid Np$ in the cuspidal Hecke algebra $\bfT$.

One expects that the order of the congruence module $\bfT/J$ is bounded from below by the order of the quotient $ \Oo/L(\chi,1-k)$, where $L(\chi,s)$ is the usual Dirichlet $L$-series attached to $\chi$ (case $k=2$ is proven in  Proposition 5.1 of \cite{SkinnerWiles97}, certain cases are implicit in e.g. \cite{MazurWiles84} and \cite{Wiles90}, general case is treated in an unpublished work of Skinner).
Assuming this bound we can apply Proposition \ref{mainprop} to conclude that $$\frac{1}{e} \cdot \sum_{\lambda \in \Pi} m_{\lambda} \geq {\rm val}_p(\#\Oo/L(\chi,1-k)).$$

\begin{rem}

A similar result on higher Eisenstein congruences can also be proven for modular forms over imaginary quadratic fields. In this case one also does not 
generally have principality of $J$, so Proposition \ref{principalJ} cannot be used and one only gets the inequality from Theorem \ref{Kr12}. 
For details we refer the reader to the upcoming work of the first two authors \cite{BergerKlosin13preprint}.
\end{rem}

\end{example}

\begin{example}[Congruence primes for  primitive forms]
Let $k \geq 2, N \geq 1$ be integers and write $\mS$ for the $\bfC$-space of elliptic modular forms of weight $2$ and level $N$. Let $p \geq 5$ be a prime. Let $f\in \mS$ be a \emph{primitive} form, i.e. an eigenform for the Hecke operators and a newform.
 Let $f_1, \ldots, f_{r'}$ be eigenforms spanning the subspace of $\mS$ orthogonal to $f$ under the Petersson inner product. Let $f_1, \dots, f_r$ be a maximal subset of the above (after possibly renumbering the $f_i$s) with the property that no pair of the $f_i$s shares the same eigenvalues for all Hecke operators away from primes $\ell \mid N$. As in Section \ref{s3} we write $\varpi_N$ for a uniformizer of the ring of integers of the composition of all the coefficient fields, and $e_N$ for the ramification degree of $\Oo_N$ over $\bfZ_p$. Let $\varpi^{m_i}_N$ be the highest power of $\varpi_N$ such that the Hecke eigenvalues of $f$ are congruent to $f_i$ modulo $\varpi^{m_i}_N$ for Hecke operators $T_{\ell}$ for all primes $\ell\nmid N$. 
Let $\bfT_0$ (resp. $\bfT_0^N$) be the $\Oo$-subalgebra acting on the space $\mS_0=\bigoplus_{i=1}^r \bfC f_i \oplus \bfC f$ generated by the Hecke operators $T_{\ell}$ for $\ell \nmid N$ (resp. for all $\ell$).

Assume now  that $f$ is ordinary at $p$ and that the $p$-adic Galois representation associated to $f$ is residually absolutely irreducible when restricted to the absolute Galois group of  $\bfQ(\sqrt{(-1)^{(p-1)/2}p})$. 
Under some mild technical assumptions 
Hida (see e.g. Theorem 5.20 and (Cg2) in \cite{Hida00}) proved that the $p$-valuation of the order of the congruence module $C(\bfT_0^N)$ (as defined in Remark \ref{Ghate}) equals the $p$-valuation of $\#\Oo/L^{\rm alg}(1,{\rm Ad}(f))$, where $L^{\rm alg}(1,{\rm Ad}(f))$ denotes the algebraic part of  the value at $s=1$ of the adjoint $L$-function attached to $f$ (for details see \cite{Hida00}).

By Proposition \ref{mainprop} we can conclude that  $$\frac{1}{e_N} \cdot \sum_{i=1}^r m_i \geq {\rm val}_p(\# C(\bfT_0)).$$
If including the operators $T_{\ell}$ for primes $\ell\mid N$ does not affect the depth of the congruences, i.e., if $\#C(\bfT_0)=\#C(\bfT_0^N)$ we also get
$$\frac{1}{e_N} \cdot \sum_{i=1}^r m_i \geq {\rm val}_p(\# (\Oo/L^{\rm alg}(1,{\rm Ad}(f)))).$$


\end{example}

\begin{example}[Congruences to Saito-Kurokawa lifts]  Let $k$ be an even positive integer and let $p>k$ as before be a fixed prime. 
Let $f \in S_{2k-2}(\SL_2(\bfZ))$ be a newform and write $F_f$ for the Saito-Kurokawa lift of $f$. Write $\mS$ for the $\bfC$-space of Siegel cusp forms of weight $k$ and full level. Then $F_f \in \mS$. Write $\mS^{\rm SK} \subset \mS$ for the subspace spanned by Saito-Kurokawa lifts.  
Let $\mS_0=\mS_{\rm SK} \oplus \bfC F_f$, where $\mS_{\rm SK}$ is the orthogonal complement of $\mS^{\rm SK}$ with respect to the standard Petersson inner product on $\mS$.  
Set $\Sigma=\{p\}$.  
Let $\bfT_0$ be the ($\Oo$-base change of the) quotient of the standard Siegel Hecke algebra $\mR_{\Sigma}$ (away from $p$) acting on $\mS_0$. Let $\lambda_0$ be the Hecke eigencharacter corresponding to $F_f$. Then $\bfT$ can be identified with the quotient of $\mR_{\Sigma}$ acting on $\mS_{\rm SK}$ and $\bfT/J$ measures congruences between the Hecke eigenvalues corresponding to $F_f$ and those corresponding to Siegel eigenforms which are not Saito-Kurokawa lifts (cf. \cite{Brown07} and \cite{Brown11} for details).

From now on assume that $f$ is ordinary at $p$ and that the $p$-adic Galois representation attached to $f$ is residually absolutely irreducible. It has been shown by Brown (\cite{Brown11}, Corollary 5.6) that under some mild assumptions the order of the finite quotient 
$\bfT/J$ is bounded from below by the order of the quotient $ \Oo/L^{\rm alg}(k,f)$, where $L^{\rm alg}(k,f)$ denotes the algebraic part of the special value at $k$ of the standard $L$-function of $f$ (for details see [loc.cit.]). Hence we can apply Proposition \ref{mainprop} to conclude that \be \label{SK} \frac{1}{e} \cdot \sum_{\lambda \in \Pi} m_{\lambda} \geq {\rm val}_p(\#\Oo/L^{\rm alg}(k,f)).\ee Since strong multiplicity one holds for forms in $\mS$, the elements of $\Pi$ are in one-to-one correspondence with eigenforms (up to scalar multiples) in $\mS_{\rm SK}$. So, (\ref{SK}) tells us that the ``depths'' $m$ of the (mod $\varpi^m$) congruences to $F_f$ of the eigenforms in $\mS_{\rm SK}$ add up to no less than the $\varpi$-adic valuation of the standard $L$-value of $f$ at $k$. 
\end{example}

\begin{rem} There are several other situations where similar conclusions can be drawn (for example in the context of congruences among automorphic forms on the unitary group $\U(2,2)$ and the CAP ideal \cite{Klosin09, Klosin12preprint} or the Yoshida congruences on $\Sp_4$ \cite{AgarwalKlosin12preprint}). Since the reasoning is verbatim to the examples listed above we leave their formulation to the interested reader. \end{rem}

\bibliographystyle{amsalpha}
\bibliography{standard2}

\end{document}